\documentclass[11pt]{amsart}
\usepackage{amsmath}
\usepackage{amssymb}
\usepackage{amsthm}
\usepackage{latexsym}
\usepackage{hyperref}
\usepackage{enumerate}
\usepackage[all]{xy}

\usepackage{amssymb,amsmath,amscd,url,geometry,pdflscape}

\setlength{\unitlength}{1cm}
\setlength{\topmargin}{0cm}
\setlength{\textheight}{22cm}
\setlength{\oddsidemargin}{1cm}
\setlength{\textwidth}{14cm}
\setlength{\voffset}{-1cm}

\newtheorem{thm}{Theorem}[section]

\def\LLC{\mathrm{LLC}}

\def\fo{\mathfrak{o}}
\def\fp{\mathfrak{p}}

\def\Gal{\mathrm{Gal}}
\def\Hom{\mathrm{Hom}}

\def\dep{\mathrm{dep}}

\def\val{\mathrm{val}}

\def\F{\mathbb{F}}

\def\C{\mathbb{C}}

\def\Z{\mathbb{Z}}

\def\Q{\mathbb{Q}}
\def\SL{\mathrm{SL}}

\def\bbG{\mathbb{G}}
\def\W{\mathbf{W}}

\begin{document}

\title[Depth]{On depth in the local Langlands correspondence for tori}

\author[A.-M. Aubert]{Anne-Marie Aubert}
\address{CNRS, Sorbonne Universit\'e, Universit\'e Paris Diderot,
Institut de Math\'ematiques de Jussieu -- Paris Rive Gauche, IMJ-PRG, 
F-75005 Paris, France}
\email{anne-marie.aubert@imj-prg.fr}
\author[R. Plymen]{Roger Plymen}
\address{School of Mathematics, Southampton University, Southampton SO17 1BJ,  England
\emph{and} School of Mathematics, Manchester University, Manchester M13 9PL, England}
\email{r.j.plymen@soton.ac.uk \quad roger.j.plymen@manchester.ac.uk}

\keywords{Local field, depth}
\date{\today}
\maketitle

\begin{abstract}  
Let $K$ be a  non-archimedean local field.    In the local Langlands correspondence for tori over $K$, we prove an asymptotic result for the depths.
\end{abstract}

\tableofcontents

\section{Introduction}  Let $K$ be a non-archimedean local field.   The LLC (Local Langlands Correspondence) for tori induces an isomorphism
\[
\lambda_T : \Hom(T(K), \C^\times) \simeq \mathrm{H}^1(\W_K, T^\vee)
\]
where $\W_K$ is the Weil group of $K$ , $T$ is a torus defined over $K$ and $T^\vee=X^*(T)\otimes_\Z\C^\times$ is the complex dual torus of $T$, see \cite{La} and \cite{Yu}.   

The Moy-Prasad theory associates to each character $\chi$ of $T(K)$  an invariant $\dep(\chi)$ called the depth of $\chi$. Also for each $\lambda\in \mathrm{H}^1(\W_K, T^\vee)$, one defines the notion of depth 
$\dep(\lambda)$. For background material on depth, see \cite{ABPS}. 
    
Concerning depth-preservation by the LLC, we have the theorem of Yu \cite[\S 7.10]{Yu}:  In the LLC for tori, if $T$ splits over a tamely ramified extension, then we have 
 \[
 \dep(\chi) = \dep (\lambda_T(\chi)).
 \] 
 
Our main result is as follows.
 
 \begin{thm} \label{main}     Let $T= R_{L/K} \mathbb{G}_m$ be an induced
torus, where $L/K$ is a finite and Galois extension of non-archimedean local fields. In the $\LLC$ for $T$, depth is preserved (for positive depth characters)  if and only if $L/K$ is tamely ramified.   
 
 If $L/K$ is wildly ramified then, for each  character $\chi$ of $T(K)$ such that $\dep(\chi)>0$,
 we have
\[
\dep(\lambda_T(\chi) ) > \dep(\chi), 
\]
and 
\[
\dep(\lambda_T(\chi)) / \dep(\chi) \to 1 \quad \textrm{as} \quad \dep(\chi) \to \infty.   
\]  
\end{thm}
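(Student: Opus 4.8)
The plan is to reduce the computation of both depths to concrete data on the extension $L/K$. For the induced torus $T = R_{L/K}\mathbb{G}_m$, we have $T(K) = L^\times$, so a character $\chi$ of $T(K)$ is simply a character of $L^\times$, and its Moy--Prasad depth $\dep(\chi)$ is computed against the filtration of $L^\times$ by the groups $1 + \mathfrak{p}_L^n$. On the Galois side, $T^\vee$ is the induced torus $\Ind_{\W_L}^{\W_K}\C^\times$, so by Shapiro's lemma $\mathrm{H}^1(\W_K, T^\vee) \simeq \mathrm{H}^1(\W_L, \C^\times) = \Hom(\W_L^{\ab}, \C^\times) \simeq \Hom(L^\times, \C^\times)$, where the last identification is local class field theory for $L$. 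Under these identifications $\lambda_T$ becomes essentially the identity on $\Hom(L^\times,\C^\times)$, so the real content is comparing the two \emph{depth} functions: the Moy--Prasad depth of $\chi$ as a character of $L^\times$ via the $K$-valuation, versus the depth $\dep(\lambda_T(\chi))$ measured via the ramification filtration on $\W_K$ (equivalently on $\W_L$ pushed into $\W_K$).

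The key computation is to express both depths in terms of the conductor of $\chi$ and the ramification data of $L/K$. First I would recall that for a character $\psi$ of $L^\times$ of conductor $a(\psi)$ (the least integer $n$ with $\psi|_{1+\mathfrak{p}_L^n} = 1$), the Moy--Prasad depth computed via the $K$-filtration equals $(a(\psi)-1)/e_{L/K}$ in the appropriate normalization, where $e_{L/K}$ is the ramification index. On the Galois side, the depth of the corresponding Weil-group character is governed by the upper-numbering ramification filtration of $\W_K$: one must transport the conductor along $\W_L \hookrightarrow \W_K$, and this transport is controlled by the Herbrand function $\psi_{L/K}$ (equivalently $\varphi_{L/K}$) of the extension. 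Concretely, a break of $\chi$ in the lower numbering on $L$ at level $m$ corresponds to a break at level $\varphi_{L/K}(m)$ in the upper numbering relevant to $\W_K$. The depth on the Galois side is then a suitably normalized value of the upper-numbering break, and the discrepancy between the two depths is exactly the discrepancy between the identity function and the Herbrand function.

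The crux is therefore an analysis of the Herbrand function $\varphi_{L/K}$ for large argument. When $L/K$ is tamely ramified, $\varphi_{L/K}$ is linear (its only break is at $0$), so the two normalizations agree and depth is preserved for all positive-depth characters. When $L/K$ is wildly ramified, $\varphi_{L/K}$ has a genuine break structure coming from the wild (higher) ramification groups, and $\varphi_{L/K}(m) > m/e$ strictly for all $m>0$ beyond the tame part; this yields the strict inequality $\dep(\lambda_T(\chi)) > \dep(\chi)$ for every positive-depth $\chi$. For the asymptotic statement, the point is that $\varphi_{L/K}$ is eventually \emph{affine} with slope $1/e_{L/K}$ (all ramification breaks occur below some finite level, so for large $m$ the function $\varphi_{L/K}$ is linear with the same leading coefficient as the tame case), so while the additive gap between $\dep(\lambda_T(\chi))$ and $\dep(\chi)$ stabilizes to a constant, the \emph{ratio} tends to $1$ as $\dep(\chi)\to\infty$.

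I expect the main obstacle to be pinning down the precise normalization constants so that both depths are measured on a common scale, and then matching the break points of $\varphi_{L/K}$ to the conductor of $\chi$ correctly. In particular one must be careful that the depth on the automorphic side is read off from the $K$-valuation on $L^\times$ (not the $L$-valuation), and that the Shapiro-lemma identification is compatible with the ramification filtrations via local class field theory functoriality. Once the dictionary between the conductor of $\chi$, the break at which $\chi$ becomes nontrivial, and the value of $\varphi_{L/K}$ at that break is established, the strict inequality and the ratio limit both follow from the elementary fact that $\varphi_{L/K}$ is piecewise linear, eventually affine with slope $1/e_{L/K}$, and lies strictly above the tame line in the wildly ramified case.
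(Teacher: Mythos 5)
Your proposal is correct and takes essentially the same route as the paper: the paper invokes the Mishra--Patanayak formula $\varphi_{L/K}(e\cdot\dep(\chi)) = \dep(\lambda_T(\chi))$ (which you re-derive in outline via Shapiro's lemma, local class field theory, and Herbrand's compatibility of ramification filtrations), and then, exactly as you do, reduces both directions of the theorem to the fact that $\varphi_{L/K}$ is piecewise linear, eventually affine of slope $1/e$, equal to $x/e$ precisely in the tame case, and strictly above $x/e$ for all $x>0$ in the wild case. The only difference is that you sketch a proof of the depth-transfer formula while the paper cites it as a black box.
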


We illustrate this result in \S \ref{EXX} with several examples of wildly ramified extensions.   
 
\section{The notion of depth}   

We recall from \cite{MoPr} that $T(K)$ carries a Moy-Prasad filtration $\{T(K)_r : r \geq 0\}$. When $T=R_{L/K}(\bbG_m)$, we have
\[T(K)_r=\{t\in T(K) :  \val_L(t-1)\ge er\},\]
where $\val_L$ is the valuation on $L$ normalized so that $\val_L(L^\times)=\Z$ and $e=e(L/K)$ is the ramification index of $L/K$.

The \emph{depth} of a character $\chi : T(K) \to \C^\times$ is 
\[
\dep(\chi):=\inf \{ r \geq 0 : T(K)_s \subset \ker(\chi) \:  \textrm{for}  \:  s > r\}.
\]

The Weil group $\W_K$ carries an upper number filtration $\{\W_K^r : r \geq 0\}$ and the depth of a parameter $\lambda \in \mathrm{H}^1(\W_K, T^\vee)$ is defined to be
\[
\dep(\lambda):=\inf \{ r \geq 0 : \W_K^s  \subset \ker(\lambda) \:  \textrm{for}  \:  s > r\}.
\]

\section{Asymptotics} We review the ramification of the Galois group $G(L/K)$ in the lower numbering, see \cite[IV. \S 1]{Ser}.  We have a decreasing sequence of normal subgroups of $G$: 
\begin{eqnarray}\label{Gram}
G = G_{-1} \supseteq G_0 \supseteq G_1 \supseteq G_2 \supseteq G_3 \supseteq \cdots \supseteq G_i = G_{i+1} = \cdots = \{1\}
\end{eqnarray}
where $G_i$ is the group of elements in $G$ acting trivially on the quotient of the ring of integers of $L$ by the $(i+1)$th power of its maximal ideal.

Now $G_0$ is the inertia group and its fixed field $L^{G_0}$ is the maximal unramified extension $K_0$ of $K$ in $L$.     By the fundamental theorem of Galois theory, we have 
$[L:K_0] = [L:L^{G_0}] = |G_0|$ and so the ramification degree $e(L/K)$ is the order of $G_0$:
\begin{eqnarray}\label{eG}
e(L/K)  =  |G_0|.
\end{eqnarray}
So,  $L/K$ is unramified if and only if $G_0 = 1$.  In that case, the largest ramification break in the sequence (\ref{Gram}) occurs at $-1$.

 Let us write the ramification index as $e = p^n m$ with $(m,p) = 1$ and $p = char(K)$, then $| G_1| = p^n$.
Now  $L/K$ is tamely ramified if and only if $G_1 = 1$.  In that case, the largest ramification break in the sequence (\ref{Gram}) occurs at  $0$.  
Moreover, if $K_1$ is the maximal tamely ramified extension of $K$ inside $L$, then $K_1$ is the fixed field of $G_1$, and is hence an extension of $K_0$, and has Galois group $G/G_1$.

 We thus have a tower of fields $L \supset K_1 \supset K_0 \supset K$
where $K_0/K$ is unramified of degree $| G/G_0 |$, the extension $K_1/K_0$ is totally tamely ramified of degree 
$| G_0/G_1 |$ 
and $L/K_1$ is totally wildly ramified of degree $| G_1 |$.

The group $G_0/G_1$ is cyclic, and its order is prime to the characteristic of the residue field $\overline{L}$; if the characteristic of $\overline{L}$ is $p \neq 0$, 
the quotients $G_i/ G_{i+1}, \; i \geq 1$, are abelian groups, 
and are direct products of cyclic groups of order $p$; the group $G_1$ is a $p$-group. See \cite[IV. \S 2]{Ser}.   

If $t$ is a real number $\geq -1$, $G_t$ denotes the ramification group $G_i$, where $i$ is the smallest integer $\geq t$.   Then the Hasse-Herbrand function \cite[IV. \S 3]{Ser} is
\[
\varphi_{L/K}(u): = \int_0^u\frac{1}{(G_0:G_t)}dt.
\]  

\begin{thm}\label{Herbrand}   Let $L/K$ be a finite and Galois extension of  non-archimedean local fields.   Let $e = e(L/K)$ denote the ramification index of $L/K$, let $r > 0$.      Then we have
\[
\varphi_{L/K}(er) = r
\]
if and only if $L/K$ is tamely ramified.   If $L/K$ is wildly ramified then
\[
\varphi_{L/K}(er) > r
\]
and
\[
\frac{\varphi_{L/K}(er)}{r} \to 1 \quad \textrm{as} \quad r \to \infty.
\]
\end{thm}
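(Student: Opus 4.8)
The plan is to reduce everything to a single explicit rewriting of $\varphi_{L/K}(er)$. First I would use the identity $|G_0| = e$ from \eqref{eG} together with the containment $G_t \subseteq G_0$ for $t \geq 0$ to rewrite the integrand as $\frac{1}{(G_0 : G_t)} = \frac{|G_t|}{e}$, so that
\[
\varphi_{L/K}(er) = \frac{1}{e}\int_0^{er} |G_t|\,dt.
\]
Separating the constant $1$ from $|G_t|$ then yields the master identity
\[
\varphi_{L/K}(er) = r + \frac{1}{e}\int_0^{er}\bigl(|G_t| - 1\bigr)\,dt,
\]
which I would make the centrepiece of the argument. The point is that the integrand $|G_t| - 1$ is a nonnegative step function (since $G_t \supseteq G_{t'} \supseteq \{1\}$ whenever $t \leq t'$), and it vanishes precisely where $G_t = 1$.

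From here the three assertions follow by inspecting this remainder integral. For the equivalence, I would observe that $L/K$ is tamely ramified iff $G_1 = 1$ iff $|G_t| = 1$ for every $t > 0$; in that case the integrand is identically zero and $\varphi_{L/K}(er) = r$. Conversely, if $L/K$ is wildly ramified then $G_1 \neq 1$, so $|G_t| = |G_1| = p^n \geq p \geq 2$ on the interval $(0, \min(1, er)]$ of positive length; the remainder integral is therefore strictly positive and $\varphi_{L/K}(er) > r$. This simultaneously establishes the first two displayed claims, the ``only if'' direction being the contrapositive of the wild case.

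For the asymptotic statement I would exploit the termination of the ramification filtration \eqref{Gram}: there is an integer $c$ with $G_t = 1$ for all $t > c$, so the integrand $|G_t| - 1$ is supported on $[0,c]$ and bounded by $e - 1$. Hence for all $r$ with $er \geq c$ the remainder integral equals the constant
\[
D := \int_0^{c}\bigl(|G_t| - 1\bigr)\,dt < \infty,
\]
independent of $r$, and the master identity gives $\varphi_{L/K}(er) = r + D/e$ for all large $r$. Dividing by $r$ yields $\varphi_{L/K}(er)/r = 1 + D/(er) \to 1$ as $r \to \infty$.

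I do not expect a serious obstacle: the whole proof is driven by the master identity, and the only points requiring care are bookkeeping ones --- correctly reading off $|G_t|$ from the convention that $G_t = G_i$ with $i = \lceil t \rceil$ (the values at the integer break-points form a measure-zero set and do not affect the integral), and invoking \eqref{Gram} to guarantee that the filtration terminates so that $D$ is finite. The one mildly delicate step is verifying strict positivity of the remainder integral for \emph{every} $r > 0$ in the wild case, including the regime $er < 1$; this is handled by noting that on $(0, \min(1, er)]$ one has $|G_t| = |G_1| > 1$, so the integrand is bounded below by $p^n - 1 > 0$ on a set of positive measure.
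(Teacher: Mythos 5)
Your proof is correct, and it rests on the same elementary fact as the paper's --- that the Hasse--Herbrand integrand $1/(G_0:G_t)$ equals the constant $1/e$ except on the bounded set where $G_t \neq 1$ --- but it organizes the computation differently. You decompose the \emph{integrand}, writing $1/(G_0:G_t) = |G_t|/e$ and splitting off the constant $1$ to get a single master identity valid for all $r>0$; the paper instead decomposes the \emph{domain} of integration at the largest ramification break $b$, proving three separate statements: that $\varphi_{L/K}(x) - x/e > 0$ for $0 < x \leq b$ (which handles the wild case when $er \leq b$), that $\varphi_{L/K}(x) = \varphi_{L/K}(b) + (x-b)/e$ for $x \geq b$, and that the invariant $\mathfrak{a}(L/K) := \varphi_{L/K}(b) - b/e$ is nonnegative with equality exactly in the tame case. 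The two routes yield the same constant: your $D/e = \frac{1}{e}\int_0^{c}\bigl(|G_t|-1\bigr)\,dt$ is precisely $\mathfrak{a}(L/K)$ (take $c = b$). What your version buys is uniformity: one identity simultaneously gives the tame equality, the wild strict inequality for \emph{every} $r>0$ (including the regime $er < b$, which the paper must treat by a separate integral comparison), and the asymptotics, with everything read off from the nonnegative, compactly supported step function $|G_t|-1$. What the paper's version buys is an explicitly named invariant $\mathfrak{a}(L/K) = \varphi_{L/K}(b) - b/e$, expressed directly in terms of the break $b$, which it then evaluates in the examples of \S\ref{EXX}; your $D$ encodes the same quantity but less explicitly.
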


\begin{proof}  Let $r>0$ and let $b$ denote the largest ramification break in the Galois group of $L/K$, and let $0 < x \leq  b$.   
We have 
\begin{eqnarray*}
\varphi_{L/K}(x) - \frac{x}{e} & = & \int_0^x\frac{1}{(G_0: G_t)} dt  -  \int_0^x\frac{1}{(G_0: \{1\})} dt\\
& = & \int_0^x \left( \frac{1}{(G_0: G_t)}   -  \frac{1}{(G_0:\{1\})} dt  \right )\\
& > & 0 
\end{eqnarray*}
so that
\[
er \leq b \implies \frac{\varphi_{L/K}(er)}{r} > 1.
\]

Let $b$ denote the largest ramification break in the Galois group of $L/K$, and let $x \geq  b$.    We have

\begin{eqnarray*}
\varphi_{L/K}(x) & = & \int_0^b \frac{1}{(G_0:G_t)} dt + \int_b^x \frac{1}{(G_0:G_t)} dt  \\
& = & \varphi_{L/K}(b) + \int_b^x \frac{1}{(G_0:G_t)} dt \\
& = & \varphi_{L/K}(b) + \int_b^x \frac{1}{(G_0: \{1\})}\\ 
& = & \varphi_{L/K}(b) + \int_b^x \frac{1}{|G_0|} dt \\
& = & \varphi_{L/K}(b) + \frac{x-b}{|G_0|}\\
& = & \varphi_{L/K}(b) + \frac{x-b}{e(L/K)}
\end{eqnarray*}
by (\ref{eG}).   We then have, with $er \geq b$: 
\begin{eqnarray*}
\varphi_{L/K}(er) & = & \varphi(b) + \frac{er - b}{e} \\
& = & r + \varphi_{L/K}(b) - \frac{b}{e}
\end{eqnarray*}
and so we have
\begin{eqnarray*}
\frac{\varphi_{L/K}(er)}{r}  & = & 1 + \frac{\varphi(b)}{r} - \frac{b}{er}.
\end{eqnarray*}

Introduce the following invariant of the field extension $L/K$:
\[
\mathfrak{a}(L/K) : = \varphi _{L/K}(b) - \frac{b } { e}.
\]
Then we have 
\[
\frac{\varphi_{L/K}(er)}{r} = 1 + \frac{\mathfrak{a}(L/K)}{r}
\]
and
so that we have
\[
\frac{\varphi_{L/K}(er)}{r} \to 1 \quad \textrm{as}  \quad r \to \infty .
\]

\medskip

Let the ramification breaks in the lower numbering occur at $b_1, b_2, \ldots, b_k = b$.   We have 
\begin{eqnarray*}
\frac{b}{e} & = & \int_0^b\frac{1}{(G_0: \{1\})} dt \\
& = & \int_0^{b_1} \frac{1}{(G_0:\{1\})} dt + \cdots +  \int_{b_{k-1}}^b \frac{1}{(G_0:\{1\})} dt \\
& \leq  & \int_0^{b_1} \frac{1}{(G_0: G_{b_1})} dt + \cdots +  \int_{b_{k-1}}^b \frac{1}{(G_0: G_b)} dt \\
& = & \varphi_{L/K}(b)
\end{eqnarray*}
so that
\[
\mathfrak{a}(L/K) = \varphi_{L/K}(b) - \frac{b}{e} \geq 0
\]
with equality  if and only if $b = 0$, i.e. if and only if $L/K$ is tamely ramified.   

Now $L/K$ is tamely ramified if and only if  $G_1 = 1$.   In that case, we have
\begin{eqnarray*}
\varphi_{L/K}(x) & = & \int_0^x \frac{1}{(G_0:G_t)}\\
& = & \int_0^x\frac{1}{(G_0: \{1\})}\\
& = & \int_0^x \frac{1}{|G_0|}\\
& = & \frac{x}{e}
\end{eqnarray*}
by (\ref{eG}) and so we have
\[
\varphi_{L/K}(er) = r
\]
for all $r \geq 0$.   
\end{proof}  

\section{Proof of Theorem~\ref{main}}    We have the elegant recent formula of Mishra and Patanayak \cite{MP}:
\begin{equation}\label{mp}
\varphi_{L/K}(e \cdot \dep(\chi))  = \dep(\lambda_T(\chi))
\end{equation}
where  
 $\chi$ is any  character of $T(K)$.
Now choose a character $\chi$ of positive depth and set $r = \dep(\chi)$ in Theorem~\ref{Herbrand}.

\section{Examples of wildly ramified extensions $L/K$}\label{EXX} 

\subsection{Characteristic $p$}  Let $K$ be a local field of characteristic $p$.   
Let $\mathfrak{o}$ be the ring of integers in $K$ and $\mathfrak{p}\subset\mathfrak{o}$ the maximal ideal. 
Let $\wp(x) = x^p - x$.

Let $\overline{K} = K/\wp(K)$.   Let $D \neq \overline{\fo}$ be an $\F_p$-line in $\overline{K}$, $m$ the integer such that $D \subset \overline{{\fp}^{-m}}$ but 
$D\not\subset \overline{{\fp}^{-m+1}}$; we know that $m$ is $>0$ and prime to $p$.   Fix an element $a \in \fp^{-m}$ whose image generates $D$, let $\alpha$ be a root of $X^p - X - a$ (in an algebraic closure of 
$K$), and let $L = K(\alpha) = K(\wp^{-1}(D))$.  This example is due to Dalawat, see \cite[\S 6]{Da}; but see also \cite[IV, \S 2, Exercise 5]{Ser} and references therein.  

The extension $L/K$ is totally (and wildly) ramified.   The unique ramification break of the degree $p$ cyclic extension $L/K$ occurs at $m$, 
\[G = G_0 = G_1 = \cdots = G_m, 
\; G_{m+1} = \cdots = \{1\}
\]
 see 
\cite[Proposition 14]{Da}.  Set $T = L^\times$, then $T$ is a wildly ramified
torus.   

Here, we have $e = p, b = m, \varphi_{L/K}(b) = m, \mathfrak{a}_{L/K} = m(1 - p^{-1})$.

\subsection{Characteristic $0$}  Let now $F$ be a non-archimedean local field of characteristic $0$ with  residue field $\mathbb{F}_q$ with $q = p^n, \; n \geq 1$.   Let $\beta$ be a root of the polynomial $X^q - X - \alpha$ with $\alpha \in K$, $\val_K(\alpha) >
- q e(F) /(q - 1)$.   Let $L = F(\beta)$, $T = L^\times$.   If $p \not | \;  \val_K(\alpha)$, $\val_K(\alpha) < 0$ then $L/K$ is a totally ramified 
Galois extension of degree $q$, and if $G = \Gal(L/K)$ then the unique ramification break of $L/K$ occurs at m:
\[G = G_0 = G_1 = \cdots = G_m, 
\; G_{m+1} = \cdots = \{1\}
\]
 where $m = -\val_K(\alpha)$.  This example is due to Abrashkin, see \cite[p. 79]{FV}.   

Here, we have $e = q, b = m, \varphi(b) = m, \mathfrak{a}_{L/K} =  m(1 - q^{-1})$.   

\subsection{Characteristic $2$}     Let $K$ be a local field of characteristic $2$, and let $L/K$ be a totally ramified quadratic extension: there are countably many of these, with ramification breaks given by 
$m = 1, 3, 5, 7, \ldots$, see \cite{AMPS}.       

Here, we have $e = 2, b = m, \varphi(b) = m, \mathfrak{a}_{L/K} = m(1 - 2^{-1}) = m/2$.     

\subsection{Example with $2$ breaks}  Let $K = \Q_2(\sqrt 5)$ and let $L/K$ be the totally ramified extension of local fields constructed in \cite[\S 4]{Ser2}.   The Galois group $\Gal(L/K)$ is 
the group 
$\{\pm 1, \pm \mathbf{i}, \pm \mathbf{j}, \pm \mathbf{k}\}$ of unit quaternions, and
\[
G_{-1} = G = G_0 = G_1 \supset G_2 = G_3 \supset \{1\}
\]
with $G_2 = \{\pm 1\}$. 

Here, we have 
\begin{eqnarray*}
|G_1| & = & 2^4\\
 G_1/G_2 & = & (\Z/2\Z)^3\\
 e &  = & 2^4\\
  b & = & 3\\
   \varphi(b) & = & 5/4\\
    \mathfrak{a}_{L/K} & = & 17/16
\end{eqnarray*}    

\subsection{Example in \cite{MP}}   Here, we have $L= \Q_p(\zeta_{p^n})$ with $n \geq 2, \;  K= \Q_p(\zeta_p)$.  Then $L/K$ is a wildly ramified extension with $e(L/K) = p^{n-1}$.   
From the calculations in \cite{MP}, we have
\begin{eqnarray*}
e & = & p^{n-1}\\
 b & = & p^{n-1} -1\\
  \varphi_{L/K}(b) & = & (n-1)(p-1)\\
   \mathfrak{a}_{L/K} & = & (n-1)(p-1) + 1 - p^{1-n}
\end{eqnarray*}

\end{document}